\theoremstyle{plain}
\newtheorem{thm}{\protect\theoremname}
\theoremstyle{definition}
\newtheorem{problem}[thm]{\protect\problemname}
\theoremstyle{remark}
\newtheorem*{rem*}{\protect\remarkname}
\theoremstyle{plain}
\newtheorem{prop}[thm]{\protect\propositionname}
\theoremstyle{plain}
\newtheorem{lem}[thm]{\protect\lemmaname}
\providecommand{\lemmaname}{Lemma}
\providecommand{\problemname}{Problem}
\providecommand{\propositionname}{Proposition}
\providecommand{\remarkname}{Remark}
\providecommand{\theoremname}{Theorem}
\begin{document}
\title{{\Large{}Restriction of the Hamilton-Jacobi equation to a submanifold
$M$ of $\mathbb{R}^{d}$: }
{\Large{}case of $M\times\mathbb{R}^{d}$ invariant by the Hamiltonian}}
\author{{\Large{}Author: Othmane ISLAH}}
\maketitle

\section{Introduction}

\subsection{Set-up of the problem and preliminaries}

We are interested in this article in the question of restricting solutions
of the Hamilton-Jacobi equation to a submanifold $M$ of dimension
$m$ of $\mathbb{R}^{d}$, assumed to be a closed subset of $\mathbb{R}^{d}$
and without boundary but not necessarily compact and connected ($m,d\in\mathbb{\mathbb{N}}$).
We recall first the general set-up of the problem which is similar to \cite{BER} and \cite{ROO}.  Then we will
formulate more precisely the question that needs to be tackled.

Let $H:\,T\mathbb{R}^{d}\longrightarrow\mathbb{R}$ be a Hamiltonian
function of class $C^{2}$ and $u_{0}:\mathbb{R}^{d}\longrightarrow\mathbb{R}$
a Lipschitz function. If $T\geq0$ , let $I=[0,T]$ or $\mathbb{R}_{+}$,
and $\Omega=I\mathbb{\times R}^{d}$. We denote by $\mathscr{(CP)}$,
the following Cauchy problem i.e the evolutionary Hamilton-Jacobi
equation: 
\[
\begin{cases}
\forall\left(t,q\right)\in\Omega & \partial_{t}u(t,q)+H\left(t,\nabla_{q}u(t,q)\right)=0\\
\forall q\in\Omega & u(0,q)=u_{0}(q)
\end{cases}
\]

We will consider for $\mathscr{(CP)}$, both strong solutions (differentiable
functions that are solutions in the classical sense) and locally Lipschitz
solutions in the viscosity sense. If $u:\Omega\longrightarrow\mathbb{R}$
is a solution to $\mathscr{(CP)}$, let $\bar{u}=u_{\mid I\times M}$
be the restriction of $u$ to $I\mathbb{\times}M$ and similarly let
$\bar{u}_{0}=u_{0\mid M}$ . Since $TM$ the tangent bundle of $M$,
is an embedded submanifold of $T\mathbb{R}^{d}$, we can define the
restriction $\overline{H}=H{}_{\mid TM}$. 
\begin{problem}
Under what conditions, can we say that $\mathrm{\overline{u}}$ is
a solution of the following Cauchy problem $\mathscr{(\overline{CP})}$
, i.e the restricted evolutionary Hamilton-Jacobi equation, 
\[
\begin{cases}
\forall\left(t,q\right)\in I\mathbb{\times}M & \partial_{t}\bar{u}(t,q)+\overline{H}\left(t,\nabla_{q}\bar{u}(t,q)\right)=0\\
\forall q\in M & \bar{u}(0,q)=\bar{u}_{0}(q)
\end{cases}
\]
? 
\end{problem}

Let $\mathrm{\varphi_{H}}$ be the Hamiltonian flow of the Hamiltonian
vector field $X_{H}$ given for $(q,p)\in\mathbb{R}^{d}$ 
\[
\mathrm{X}_{\mathrm{H}}(q,p)=(\nabla_{p}H(q,p),-\nabla_{q}H(q,p))
\]
We also fix as a norm, the canonical euclidean norm on $T\mathbb{R}^{d}$.We
make sometime in this article a set of assumptions on $H$ that there
exists $C>0$ so that for $(q,p)\in T\mathbb{R}^{d}$ 
\begin{align}
\left\Vert \mathrm{d^{2}}H(q,p)\right\Vert \leq C,\,\,\left\Vert \nabla_{p}H(q,p)\right\Vert \leq C\left(1+\left\Vert p\right\Vert \right),\,\, & \left|H(q,p)\right|\leq C\left(1+\left\Vert p\right\Vert ^{2}\right)\label{eq:Hamiltonian assumptions}
\end{align}

We will show in this article that the answer to this question is affirmative
under following assumptions on $H$, $M$ and $u_{0}$

if $M\times\mathbb{R}^{d}$ is invariant by $\mathrm{\varphi}_{H}$
then $\bar{u}$ is a classical solution (respectively, a viscosity
solution) of $\mathscr{(\overline{CP})}$ , if $u$ is is a classical
solution (respectively, a viscosity solution) of $\mathscr{(CP)}$

\subsection{Main result}

We need to recall some basic geometric facts. $M$ being a submanifold
of $\mathbb{R}^{d}$, we consider the normal bundle $NM$ of $M$,
and we have by definition: 
\[
NM=\underset{q\in M}{\bigcup}\{q\}\times(T_{q}M)^{\bot}
\]
The Tubular Neighbourhood Theorem ensures there exists a continuous
positive function on $M$, $\theta:\,q\rightarrow\theta(q)$ such
that if $N_{\theta}=\{(q,n)\in NM/\left\Vert n\right\Vert <\theta(q)\}$
then : 
\begin{eqnarray*}
\psi: & N_{\theta}\longrightarrow\mathbb{R}^{d}\\
 & (q,n)\longrightarrow q+n
\end{eqnarray*}

is a smooth diffeomorphism onto onto its open image $U_{\theta}=\psi(N_{\theta})$

Let $\pi_{1}:\,NM\longrightarrow M$ ~be the canonical projection.
For $q\in U_{\theta}$, let $\tilde{q}=\pi_{1}\circ\psi^{-1}(q)$
and $v(q)=d(\pi_{1}\circ\psi^{-1})(q)|_{T_{\tilde{q}}M}$. We prove
first that $v(q)$ is an automorphism of $T_{\tilde{q}}M$. It is
enough to prove that $v(q)$ is injective. First, we have: 
\[
d(\pi_{1}\circ\psi^{-1})(q)=d\pi_{1}(\tilde{q})\circ d\psi^{-1}(q)\,\tag{chain rule}
\]
This linear map is surjective so by the rank-nullity theorem: 
\[
\dim(\ker(d(\pi_{1}\circ\psi^{-1})(q)))=d-m
\]
Since $(T_{\tilde{q}}M)^{\perp}\subset\ker(d(\pi_{1}\circ\psi^{-1})(q))$
and since these two vector spaces have the same dimension, we deduce
that : 
\[
(T_{\tilde{q}}M)^{\perp}=\ker(d(\pi_{1}\circ\psi^{-1})(q))
\]
So, $v(q)$ is injective and therefore an automorphism since it is
an endomorphism of $T_{\tilde{q}}M$. We will see in the next section
further properties of the automorphism $v(q)$.

We will also denote for $\,q\in M$, $\varPi_{q}$ the symmetric matrix
of $\mathcal{M}_{d}(\mathbb{R})$ which is the orthogonal projector
on the subvector space $T_{q}M\subset\mathbb{R}^{d}$. 

We come now to the precise statement of the first main result:
\begin{thm}
\label{thm:Restrictions-and-extensions}Restrictions and extensions
under invariance of $M\times\mathbb{R}^{d}$

a) If $M\times\mathbb{R}^{d}$ is invariant by $\mathrm{\varphi_{H}}$
and if $u$ is a classical solution (respectively, a locally Lipschitz
viscosity solution) of $\mathscr{(CP)}$ then $\bar{u}$ is a classical
solution (respectively a viscosity solution) of $\mathscr{(\overline{CP})}$
, 

b) Let us assume we are given $C^{1}$ functions $\bar{H}:TM\longrightarrow\mathbb{R}$
and $\bar{u}_{0}:M\longrightarrow\mathbb{R}$ such that there exits
$\overline{u}:I\times M\longrightarrow\mathbb{R}$ which is a differentiable
solution (respectively, a locally Lipschitz viscosity solution) of
$\mathscr{(\overline{CP})}$. We define : 
\begin{align*}
H:TU_{\theta} & =U_{\theta}\times\mathbb{R}^{d}\longrightarrow\mathbb{R}\\
 & (q,p)\longmapsto\bar{H}\left(\tilde{q},v^{-1}(q)(\varPi_{q}p)\right)
\end{align*}

(where $\tilde{q}=\pi_{1}\circ\psi^{-1}(q)$ and $v(q)$ as above).
Also, for $a\in\mathbb{R}$, we define for $(t,q)\in I\times U_{\theta}$
\begin{itemize}
\item $u_{0}(q)=\bar{u}_{0}(\tilde{q})+a\left\Vert q-\tilde{q}\right\Vert ^{2}$ 
\item $u(t,q)=\bar{u}(t,\tilde{q})+a\left\Vert q-\tilde{q}\right\Vert ^{2}$
\end{itemize}
Then $u$ is is a differentiable solution (respectively, a locally
Lipschitz viscosity solution) of $\mathscr{(CP)}$) on $I\times U_{\theta}$
with Hamiltonian $H$ and initial condition $u_{0}$. 
\end{thm}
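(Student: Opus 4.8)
The plan is to reduce both directions of the theorem to a single algebraic property of $H$ together with the elementary differential geometry of the tube $\psi:N_\theta\to U_\theta$.

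\textbf{Reduction to one algebraic fact.}
Write $r=\pi_1\circ\psi^{-1}:U_\theta\to M$ for the tubular retraction and $\tilde q=r(q)$. Since the second component $-\nabla_qH$ of $X_H$ is unconstrained, $M\times\mathbb R^d$ is $\varphi_H$-invariant iff $X_H$ is tangent to it along $M\times\mathbb R^d$, i.e.\ iff $\nabla_pH(t,q,p)\in T_qM$ for all $q\in M$, $p\in\mathbb R^d$. Integrating $s\mapsto H(t,q,\varPi_qp+s(I-\varPi_q)p)$, whose $s$-derivative is $\langle\nabla_pH,(I-\varPi_q)p\rangle=0$ under this condition, this is equivalent to
\[
H(t,q,p)=H(t,q,\varPi_qp)=\overline H(t,q,\varPi_qp)\qquad(q\in M).
\]
Besides this I use only: $\partial_t\bar u=\partial_tu$ and $\nabla_q\bar u(t,q)=\varPi_q\nabla_qu(t,q)$ on $I\times M$ (the intrinsic gradient of the restriction); $\nabla_q(\|q-\tilde q\|^2)=2(q-\tilde q)\in(T_{\tilde q}M)^\perp$ on $U_\theta$; and the properties of $v$ (the identity $\ker dr(q)=(T_{\tilde q}M)^\perp$ already established in the Introduction, the self-adjointness of $v(q)$ on $T_{\tilde q}M$, and $v(q)=\mathrm{id}$ for $q\in M$; in particular $dr(q)^*$ maps $T_{\tilde q}M$ into itself). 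In part b), $\varPi_q$ in the formula for $H$ is read as $\varPi_{\tilde q}$.

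\textbf{Part a).}
In the classical case this is just the chain rule: for $q\in M$, using $\nabla_q\bar u=\varPi_q\nabla_qu$, then $\overline H(t,q,\cdot)=H(t,q,\cdot)$ on $T_qM$, then $H(t,q,\varPi_q\cdot)=H(t,q,\cdot)$, then $\mathscr{(CP)}$,
\[
\partial_t\bar u(t,q)+\overline H\big(t,q,\nabla_q\bar u(t,q)\big)=\partial_tu(t,q)+H\big(t,q,\varPi_q\nabla_qu(t,q)\big)=\partial_tu(t,q)+H\big(t,q,\nabla_qu(t,q)\big)=0,
\]
and $\bar u(0,\cdot)=u_0|_M=\bar u_0$. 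For the viscosity case, let $\phi\in C^1(I\times M)$ with $\bar u-\phi$ having a (WLOG strict, after normalising $\bar u(t_0,q_0)=\phi(t_0,q_0)$) local maximum at $(t_0,q_0)$, $t_0>0$ (subsolution case; the supersolution case uses $-j\|q-\tilde q\|^2$ and is symmetric). One cannot restrict or extend test functions directly, since $u$ is only Lipschitz off $M$; instead penalise: put $\Phi_j(t,q)=\phi(t,\tilde q)+j\|q-\tilde q\|^2\in C^1(I\times U_\theta)$ and let $(t_j,q_j)$ maximise $u-\Phi_j$ on a fixed small closed ball about $(t_0,q_0)$. Since $u$ is $L$-Lipschitz there and $\bar u-\phi\le0$ near $q_0$ on $M$, comparing with the value $0$ at $(t_0,q_0)$ gives $0\le(u-\Phi_j)(t_j,q_j)\le L\|q_j-\tilde q_j\|-j\|q_j-\tilde q_j\|^2$, hence $\|q_j-\tilde q_j\|\le L/j$, $j\|q_j-\tilde q_j\|^2\to0$, $(t_j,q_j)\to(t_0,q_0)$, and $2j(q_j-\tilde q_j)$ stays bounded. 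The subsolution inequality for $u$ at $(t_j,q_j)$ reads, with $\eta_j:=\nabla_q\Phi_j(t_j,q_j)=dr(q_j)^*\nabla_q\phi(t_j,\tilde q_j)+2j(q_j-\tilde q_j)$,
\[
\partial_t\Phi_j(t_j,q_j)+H\big(t_j,q_j,\eta_j\big)\le0 .
\]
Here $dr(q_j)^*\nabla_q\phi(t_j,\tilde q_j)\in T_{\tilde q_j}M$ tends to $\nabla_q\phi(t_0,q_0)$ (using $v(q_0)=\mathrm{id}$), while $2j(q_j-\tilde q_j)\in(T_{\tilde q_j}M)^\perp$; since $\|\varPi_{q_0}-\varPi_{\tilde q_j}\|\to0$ one gets $\varPi_{q_0}\eta_j\to\nabla_q\phi(t_0,q_0)$. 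As $H(t_0,q_0,\cdot)$ is unchanged by adding vectors of $(T_{q_0}M)^\perp$, $H(t_0,q_0,\eta_j)=H(t_0,q_0,\varPi_{q_0}\eta_j)$; and $(\eta_j)$ being bounded with $H$ uniformly continuous on a compact neighbourhood, $H(t_j,q_j,\eta_j)=H(t_0,q_0,\varPi_{q_0}\eta_j)+o(1)\to\overline H(t_0,q_0,\nabla_q\phi(t_0,q_0))$. With $\partial_t\Phi_j(t_j,q_j)\to\partial_t\phi(t_0,q_0)$ this yields $\partial_t\phi(t_0,q_0)+\overline H(t_0,q_0,\nabla_q\phi(t_0,q_0))\le0$.

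\textbf{Part b).}
Regularity and the initial datum are immediate: $r$, $q\mapsto\varPi_{\tilde q}$ and $v$ are smooth on $U_\theta$, so $u(t,q)=\bar u(t,\tilde q)+a\|q-\tilde q\|^2$ inherits differentiability (resp.\ local Lipschitz regularity) from $\bar u$, and $u(0,q)=\bar u_0(\tilde q)+a\|q-\tilde q\|^2=u_0(q)$. In the classical case, $\partial_tu(t,q)=\partial_t\bar u(t,\tilde q)$; and since $\ker dr(q)=(T_{\tilde q}M)^\perp$ and $q-\tilde q\perp T_{\tilde q}M$, a chain-rule computation followed by applying $\varPi_{\tilde q}$ and then $v^{-1}(q)$, using the self-adjointness of $v(q)$, gives $v^{-1}(q)\big(\varPi_{\tilde q}\nabla_qu(t,q)\big)=\nabla_q\bar u(t,\tilde q)$; hence $H(q,\nabla_qu(t,q))=\overline H(\tilde q,\nabla_q\bar u(t,\tilde q))$ and $\partial_tu+H(q,\nabla_qu)=\big(\partial_t\bar u+\overline H(\cdot,\nabla_q\bar u)\big)(t,\tilde q)=0$. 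For the viscosity case, fix $\Psi\in C^1(I\times U_\theta)$ with $u-\Psi$ attaining a local maximum at $(t_0,q_0)$ (supersolution case symmetric); set $\tilde q_0=r(q_0)$, $n_0=q_0-\tilde q_0$, and introduce the $C^1$ section $g(\tilde q)=\tilde q+(I-\varPi_{\tilde q})n_0$ of $r$ near $\tilde q_0$. From $\varPi^2=\varPi$ one checks $g(\tilde q_0)=q_0$, $r\circ g=\mathrm{id}$, that $dg(\tilde q_0)$ maps $T_{\tilde q_0}M$ into itself as $v^{-1}(q_0)$, and that $d\big(\|g(\tilde q)-\tilde q\|^2\big)$ vanishes at $\tilde q_0$. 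Then $\phi(t,\tilde q):=\Psi(t,g(\tilde q))-a\|g(\tilde q)-\tilde q\|^2$ is $C^1$ near $(t_0,\tilde q_0)$ and $\bar u-\phi=(u-\Psi)\circ(\mathrm{id}\times g)$ there, so $\bar u-\phi$ has a local maximum at $(t_0,\tilde q_0)$, with $\partial_t\phi(t_0,\tilde q_0)=\partial_t\Psi(t_0,q_0)$ and $\nabla_q\phi(t_0,\tilde q_0)=v^{-1}(q_0)\big(\varPi_{\tilde q_0}\nabla_q\Psi(t_0,q_0)\big)$. The subsolution inequality for $\bar u$ and the definition of $H$ then give
\[
\partial_t\Psi(t_0,q_0)+H\big(q_0,\nabla_q\Psi(t_0,q_0)\big)=\partial_t\phi(t_0,\tilde q_0)+\overline H\big(\tilde q_0,\nabla_q\phi(t_0,\tilde q_0)\big)\le0 .
\]

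\textbf{The main obstacle.}
The algebra aside, the two genuinely delicate points are in the viscosity parts. In a), the penalised maximiser is pushed onto $M$ at rate $O(1/j)$, yet the normal gradient $2j(q_j-\tilde q_j)$ does \emph{not} vanish — it is merely bounded — and the argument survives only because the invariance hypothesis makes $H(t_0,q_0,\cdot)$ blind to $(T_{q_0}M)^\perp$ in the limit; the crux is to extract convergence of $\varPi_{q_0}\eta_j$ rather than of $\eta_j$. In b), everything hinges on choosing the section $g$ so that the chain-rule factor is exactly $v(q_0)^{-1}$ and the quadratic correction is flat to first order, which is precisely what makes $\nabla_q\phi(t_0,\tilde q_0)=v^{-1}(q_0)\varPi_{\tilde q_0}\nabla_q\Psi(t_0,q_0)$ fit the definition of $H$; verifying these identities is where the finer structure of $v$ (ultimately the second fundamental form of $M$) is needed.
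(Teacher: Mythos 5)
Your argument is correct, and while the classical cases coincide with the paper's computations (the same identities $\nabla_q\bar u=\varPi_q\nabla_qu$ and $v^{-1}(q)\varPi_{\tilde q}\nabla_qu(t,q)=\nabla_q\bar u(t,\tilde q)$), both viscosity parts take a genuinely different route. For the restriction a), the paper first flattens $M$ to $\mathbb{R}^m\times\{0_{d-m}\}$ via Lemma \ref{lem:Change-of-variables} and then runs an explicitly quantitative penalization (the $\alpha(\epsilon),\mu(\epsilon)$ bookkeeping on nested balls), whereas you penalize intrinsically with $\phi(t,\tilde q)+j\Vert q-\tilde q\Vert^{2}$ through the tubular projection and send $j\to\infty$; both arguments rest on the same two facts — the normal gradient $2j(q_j-\tilde q_j)$ is only bounded (by the Lipschitz constant $L$), and $H(q_0,\cdot)$ is blind to normal momenta — but yours avoids the chart reduction and the delicate constants, at the price of one step you state too quickly: $(t_j,q_j)\to(t_0,q_0)$ does not follow from the displayed comparison alone, but from the standard cluster-point argument (any limit point lies on $M$ because $\Vert q_j-\tilde q_j\Vert\le L/j$, and is a maximum point of $\bar u-\phi$ with value $\ge 0$, hence equals $(t_0,q_0)$ by the strictness you imposed); this should be written out, together with the remark that for large $j$ the maximum is then interior so the viscosity inequality applies. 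For the extension b), the paper transports the test function along a section $h$ built from chart gradients and a distance ratio that keeps $\Vert h(Q)-Q\Vert$ constant, and must then invoke the identity $\varPi^{\bot}_{Q_0}\nabla_q\phi(t_0,q_0)=2a(q_0-Q_0)$ to eliminate the normal part of the gradient; your section $g(\tilde q)=\tilde q+(I-\varPi_{\tilde q})n_0$ is simpler, treats $q_0\in M$ and $q_0\notin M$ uniformly (no case split), and since $dg(\tilde q_0)$ is tangent-valued and equals $v^{-1}(q_0)$ (which follows either from $r\circ g=\mathrm{id}$ plus $\ker dr(q_0)=(T_{\tilde q_0}M)^{\perp}$, or from the formula $v^{-1}(q_0)p-p=-h^{\star}_{\tilde q_0}(p,n_0)$ of the proposition on the properties of $v(q)$), while $a\Vert g(\tilde q)-\tilde q\Vert^{2}$ has vanishing differential at $\tilde q_0$, the normal component of $\nabla_q\Psi$ never enters and the paper's normal-gradient step is bypassed entirely. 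What the paper's route buys is elementary coordinate computations after flattening and a correction term that is literally constant along its section; what yours buys is a coordinate-free and shorter proof relying only on the smoothness of $\varPi$, of the tubular retraction, and on the self-adjointness of $v(q)$ already established in Section 2.
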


We will verify in the following section that the invariance of $M\times\mathbb{R}^{d}$
by $\varphi_{H}$ is equivalent to the following property of independence
of the Hamiltonian function from normal moments on $M$: 
\[
\forall(q,p)\in M\times\mathbb{R}^{d}\,\,H(q,p)=H(q,\varPi_{q}p)
\]

\begin{rem*}
In other words, the second part of this theorem asserts that starting
from a solution $\bar{u}$ of the Hamilton-Jacobi equation defined
on $\mathbb{R}\times M$ with Hamiltonian function $\overline{H}:\,TM\rightarrow\mathbb{R}$
, we can extend it to a function $u$ defined in a neighbourhood $U$
of $M$ such that it is a solution of the same type of the Hamilton-Jacobi
equation on $\mathbb{R}\times U$ for a certain Hamiltonian function
$H:\,TU\rightarrow\mathbb{R}$. Moreover, $M\times\mathbb{R}^{d}$
is invariant by the Hamiltonian flow $\varphi_{H}$. 
\end{rem*}
We notice that the previous theorem cannot be applied if $H$ is fiberwise
strictly convex in $p$, simply because in this case $M\times\mathbb{R}^{d}$
cannot be invariant by the Hamiltonian flow since the invariance requires
an independence from normal moments.

We will first interpret in section \eqref{sec:Geometry-of-the} geometrically
and with respect to the Hamiltonian vector field the assumption of
invariance of $M\times\mathbb{R}^{d}$as well as $TM$ . We will also
work out some other geometric properties of the problem at hand. The
section \eqref{sec:Restriction-of-solutions} will be devoted to the
proof of Theorem \eqref{thm:Restrictions-and-extensions} and the
methods used are essentially classical. 

\section{\label{sec:Geometry-of-the}Geometry of the problem and the dual
point of view }

Throughout this article, the Euclidean scalar product on $\mathbb{R}^{d}\times\mathbb{R}^{d}$
will be denoted by the brackets$<,>$. Also, for $q\in M$, the orthogonal
projector on $T_{q}M$ (respectively on $\left(T_{q}M\right)^{\perp}$)
will be denoted by the symmetric metric $\varPi_{q}$ (respectively
$\varPi_{q}^{\perp}$).

\subsection{Invariance of $M\times\mathbb{R}^{d}$}

As mentioned above, we show explicitly here the meaning of the invariance
of $M\times\mathbb{R}^{d}$ in terms of the behaviour of the Hamiltonian
on $M\times\mathbb{R}^{d}$ 
\begin{prop}
\label{prop:invariance M cross Rn }$M\times\mathbb{R}^{d}$ is invariant
by the Hamiltonian flow if and only if : 
\[
\forall q\in M,\,\forall p\in\mathbb{R}^{d}\,H(q,p)=H(q,\Pi_{q}(p))
\]

where $\Pi_{q}$ is the orthogonal projection on $T_{q}M$. 
\end{prop}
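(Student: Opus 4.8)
The plan is to work directly with the Hamiltonian vector field $X_H(q,p)=(\nabla_p H(q,p),-\nabla_q H(q,p))$ and characterize the invariance of the submanifold $M\times\mathbb{R}^d\subset T\mathbb{R}^d$ in terms of its tangency to $X_H$. First I would recall the elementary fact that, for a $C^1$ flow, a closed (embedded) submanifold $N$ is invariant if and only if $X_H$ is everywhere tangent to $N$; since $M$ is assumed closed in $\mathbb{R}^d$ and without boundary, $N=M\times\mathbb{R}^d$ is a closed embedded submanifold of $T\mathbb{R}^d$, so this criterion applies. The tangent space of $N$ at $(q,p)$ is $T_qM\times\mathbb{R}^d$, and the second component of $X_H$ already lies in $\mathbb{R}^d$, so tangency of $X_H$ to $N$ is equivalent to the single condition
\[
\forall q\in M,\ \forall p\in\mathbb{R}^d:\qquad \nabla_p H(q,p)\in T_qM,
\]
i.e. $\varPi_q^{\perp}\nabla_p H(q,p)=0$.

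Next I would show that this tangency condition is equivalent to the stated algebraic identity $H(q,p)=H(q,\varPi_q p)$ for all $q\in M$, $p\in\mathbb{R}^d$. For the direction ($\Leftarrow$): fix $q\in M$ and differentiate $p\mapsto H(q,p)-H(q,\varPi_q p)\equiv 0$ in $p$; using that $\varPi_q$ is a (constant in $p$, symmetric, idempotent) linear map one gets $\nabla_p H(q,p)=\varPi_q\nabla_p H(q,\varPi_q p)$, whose right-hand side lies in the image of $\varPi_q$, namely $T_qM$; hence $\nabla_p H(q,p)\in T_qM$. For the direction ($\Rightarrow$): fix $q\in M$ and $p\in\mathbb{R}^d$ and consider the segment $s\mapsto \gamma(s)=\varPi_q p + s\,\varPi_q^{\perp}p$ for $s\in[0,1]$, which joins $\varPi_q p$ (at $s=0$) to $p$ (at $s=1$) and has constant velocity $\gamma'(s)=\varPi_q^{\perp}p\in(T_qM)^{\perp}$. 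Then
\[
H(q,p)-H(q,\varPi_q p)=\int_0^1 \big\langle \nabla_p H(q,\gamma(s)),\,\varPi_q^{\perp}p\big\rangle\,ds,
\]
and since $\nabla_p H(q,\gamma(s))\in T_qM$ by hypothesis while $\varPi_q^{\perp}p\in(T_qM)^{\perp}$, each integrand vanishes; hence $H(q,p)=H(q,\varPi_q p)$.

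The main obstacle, and the place where I would be most careful, is the first reduction: justifying rigorously that invariance of $N=M\times\mathbb{R}^d$ under $\varphi_H$ is equivalent to tangency of $X_H$ along $N$. The forward implication is immediate by differentiating $t\mapsto \varphi_H^t(q,p)\in N$ at $t=0$. The converse requires either a local-coordinates argument (straightening $M$ near a point $q_0$ so that $M$ becomes a coordinate slice and invoking uniqueness for the ODE in those coordinates, which forces the missing coordinates to stay zero) or a uniqueness argument along $N$ itself: restricting $X_H$ to $N$ gives a well-defined $C^1$ vector field on the manifold $N$ whose integral curves, once they exist, must coincide with those of $X_H$ by uniqueness of solutions to ODEs with $C^1$ (locally Lipschitz) right-hand side — and here the hypothesis $H\in C^2$ is exactly what guarantees $X_H\in C^1$. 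One should also note that since the second component of $X_H$ maps into all of $\mathbb{R}^d$, no constraint arises from the $p$-direction, so the whole content of invariance is carried by the $q$-direction, i.e. by $\nabla_p H(q,p)\in T_qM$; I would state this explicitly to make the equivalence with $H(q,p)=H(q,\varPi_q p)$ transparent.
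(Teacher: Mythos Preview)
Your proposal is correct and follows essentially the same route as the paper: both reduce invariance of $M\times\mathbb{R}^d$ to the tangency condition $\nabla_p H(q,p)\in T_qM$, then pass between this and the identity $H(q,p)=H(q,\Pi_q p)$ via the same line-integral argument in one direction and a differentiation in the other. The only cosmetic difference is that for the ($\Leftarrow$) direction you differentiate $p\mapsto H(q,p)-H(q,\Pi_q p)$ globally to get $\nabla_pH(q,p)=\Pi_q\nabla_pH(q,\Pi_qp)$, while the paper fixes $n\in(T_qM)^\perp$ and differentiates $t\mapsto H(q,p+tn)$ at $t=0$; these are the same computation.
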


\begin{proof}
First we notice that since $M\times\mathbb{R}^{d}$ is a closed subset
of $T\mathbb{R}^{d}$, the invariance by the Hamiltonian flow of is
equivalent to : 
\[
\forall(q,p)\in M\times\mathbb{R}^{d},\,(\nabla_{p}H(q,p),-\nabla_{q}H(q,p))\in T_{(q,p)}M\times\mathbb{R}^{d}=T_{q}M\times\mathbb{R}^{d}
\]

which is the same as:: 
\[
\forall(q,p)\in M\times\mathbb{R}^{d},\,\nabla_{p}H(q,p)\in T_{q}M
\]

Let us prove the only if part first. We have for $(q,p)\in M\times\mathbb{R}^{d}$
: 
\begin{align*}
H(q,p)= & H(q,\Pi_{q}(p))+\int_{0}^{1}\frac{d}{ds}H(q,\Pi_{q}(p)+s(p-\Pi_{q}(p))).ds\\
= & H(q,\Pi_{q}(p))+\int_{0}^{1}<\nabla_{p}H(q,\Pi_{q}(p)+s(p-\Pi_{q}(p))),\,p-\Pi_{q}(p)>ds
\end{align*}

But : 
\[
<\nabla_{p}H(q,\Pi_{q}(p)+s(p-\Pi_{q}(p))),\,p-\Pi_{q}(p)>=0
\]
because by assumption on $H$, 
\[
\nabla_{p}H(q,\Pi_{q}(p)+s(p-\Pi_{q}(p)))\in T_{q}M
\]
and 
\[
p-\Pi_{q}(p)\in(T_{q}M)^{\bot}
\]

So for $(q,p)\in M\times\mathbb{R}^{d}$ 
\[
H(q,p)=H(q,\Pi_{q}(p))
\]

For the if part, it is enough to notice that for $(q,p)\in M\times\mathbb{R}^{d}$,
and $(t,n)\in\mathbb{R}\times(T_{q}M)^{\bot}$, 
\[
H(q,p+tn)=H(q,\Pi_{q}(p))
\]
So for any $n\in(T_{q}M)^{\bot}$: 
\[
<\nabla_{p}H(q,p),n>=0
\]
which implies that $\nabla_{p}H(q,p)\in T_{q}M$. 
\end{proof}

\subsection{Second fundamental form of $M$ and invariance of $TM$}

We are going to recall the characterization of the double tangent
space $TTM$ in terms of the second fundamental form of the submanifold
$M$ with respect to the Euclidean metric.This will allow us to obtain
explicit formula for the invariance of $TM$ by the Hamiltonian flow.
The results here are borrowed from \cite{RS}.First, let us recall some definitions
and classical properties. For each $q\in M$, the orthogonal projection
on $T_{q}M$, is given by a symmetric matrix $\varPi_{q}\in\mathcal{M}_{n}\left(\mathbb{R}\right)=\mathbb{R}^{n\times n}$
and the following map is smooth: 
\begin{align*}
\varPi:M & \rightarrow\mathbb{R}^{n\times n}\\
q & \rightarrow\varPi_{q}
\end{align*}

Then for $v\in T_{q}M$, $d\Pi_{q}(v)\in\mathbb{R}^{n\times n}$ so
as a matrix it can be multiplied by a vector. Moreover we have, $\forall q\in M,\forall v,w\in T_{q}M$:
\[
\,d\varPi_{q}(v)w=d\varPi_{q}(w)v\in(T_{q}M)^{\bot}
\]

The collection of symmetric bilinear maps defined for $q\in M$ by
: 
\begin{align*}
h_{q}:T_{q}M\times T_{q}M & \rightarrow(T_{q}M)^{\bot}\\
(v,w) & \mapsto d\varPi_{q}(v)w
\end{align*}

is called the second fundamental form on $M$.

Finally the tangent space at a point $(q,p)\in TM$ can be expressed
in terms of the second fundamental form as : 
\[
T_{(q,p)}TM=\left\{ (\hat{q},\hat{p})\in\mathbb{R}^{d}\times\mathbb{R}^{d}\mid\hat{q}\in T_{q}M,\,(I_{d}-\varPi_{q})\hat{p}=h_{q}(\hat{q},p)\right\} 
\]

Given this, we obtain : 
\begin{prop}
\label{prop:-is-invariant}$TM$ is invariant by the Hamiltonian flow
if and only if $\forall(q,p)\in TM,$: 
\[
\nabla_{p}H(q,p)\in T_{q}M,\,(I_{d}-\varPi_{q})\nabla_{q}H(q,p)=-h_{q}(\nabla_{p}H(q,p),p)
\]
\end{prop}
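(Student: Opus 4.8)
The plan is to reduce the statement to the two conditions already isolated in Proposition~\ref{prop:invariance M cross Rn } and in the characterization of $T_{(q,p)}TM$ recalled just above, and then verify each. Recall that $TM$ is a closed subset of $T\mathbb{R}^{d}$, so by the same argument used at the start of the proof of Proposition~\ref{prop:invariance M cross Rn }, invariance of $TM$ by $\varphi_{H}$ is equivalent to the tangency condition $X_{H}(q,p)\in T_{(q,p)}TM$ for every $(q,p)\in TM$. So the whole statement is simply the explicit form of this tangency condition once we plug $X_{H}(q,p)=(\nabla_{p}H(q,p),-\nabla_{q}H(q,p))$ into the description
\[
T_{(q,p)}TM=\left\{(\hat{q},\hat{p})\mid \hat{q}\in T_{q}M,\ (I_{d}-\varPi_{q})\hat{p}=h_{q}(\hat{q},p)\right\}.
\]

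First I would write $\hat{q}=\nabla_{p}H(q,p)$ and $\hat{p}=-\nabla_{q}H(q,p)$. The first membership condition $\hat{q}\in T_{q}M$ becomes exactly $\nabla_{p}H(q,p)\in T_{q}M$, which is the first asserted equation. The second condition $(I_{d}-\varPi_{q})\hat{p}=h_{q}(\hat{q},p)$ becomes
\[
(I_{d}-\varPi_{q})\bigl(-\nabla_{q}H(q,p)\bigr)=h_{q}\bigl(\nabla_{p}H(q,p),p\bigr),
\]
i.e. $(I_{d}-\varPi_{q})\nabla_{q}H(q,p)=-h_{q}(\nabla_{p}H(q,p),p)$, which is the second asserted equation. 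Conversely, if both displayed equations hold at $(q,p)\in TM$, then $X_{H}(q,p)$ lies in the set describing $T_{(q,p)}TM$, hence $X_{H}$ is tangent to $TM$ along $TM$, giving invariance. This establishes the equivalence in both directions.

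One point that needs a remark before the substitution is legitimate: the expression $h_{q}(\nabla_{p}H(q,p),p)$ only makes sense because, under the first condition, $\nabla_{p}H(q,p)\in T_{q}M$, so that $h_{q}$ is being evaluated on a pair of tangent vectors as required by its definition $h_{q}:T_{q}M\times T_{q}M\to(T_{q}M)^{\bot}$; here the second slot $p$ is indeed in $T_{q}M$ since $(q,p)\in TM$. So the two conditions in the proposition are not independent — the second presupposes the first — and I would phrase the proof so that the first condition is verified (or assumed) before the second is even written down, exactly as in the structure of the defining set for $T_{(q,p)}TM$, which imposes $\hat q\in T_qM$ first.

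The main obstacle is essentially bookkeeping rather than mathematics: one must be careful that the equivalence "invariance $\Leftrightarrow$ infinitesimal tangency" is being invoked correctly for the closed invariant set $TM$ (a standard fact for closed sets invariant under a flow, already used once in the excerpt), and that the identification $T_{(q,p)}(TM)$ with the stated set — borrowed from \cite{RS} — is applied at the right base point. Since both ingredients are available, the proof is short: invoke the closed-set invariance criterion, substitute the components of $X_{H}$, and read off the two equations. I would keep the write-up to a few lines, emphasizing only the substitution and the closedness argument.
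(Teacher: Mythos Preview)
Your proposal is correct and matches the paper's approach exactly: the paper states the proposition immediately after the formula for $T_{(q,p)}TM$ with the words ``Given this, we obtain'', leaving the proof implicit, and your write-up simply fills in that substitution (closedness of $TM$ gives the tangency criterion, then plug $X_H$ into the description of $T_{(q,p)}TM$). Your remark that the second condition presupposes the first is a nice clarification the paper does not make explicit.
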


If $I$ is an interval and $\gamma:I\rightarrow M$ a smooth path
on $M$, we say that $X$ is a normal vector field along $\gamma$
if $\forall t\in I$, $X(t)\in(T_{\gamma(t)}M)^{\perp}$. We recall
the following classical formula for the covariant derivative of a
normal vector field along a path. 
\begin{prop}[Gauss-Weingarten Formula]

For $q\in M$, we define 
\begin{align*}
h_{q}^{\star}:T_{q}M\times(T_{q}M)^{\bot}\longrightarrow T_{q}M\\
(a,b)\longrightarrow d\varPi_{q}(a)b
\end{align*}

Then we have

- $\forall q\in M$ $\forall a\in T_{q}M$ $h_{q}^{\star}(a,.)$ is
the adjoint operator of $h_{q}(a,.)$ i.e : 
\[
\forall b\in(T_{q}M)^{\bot},\,\forall c\in T_{q}M\,<c,h_{q}^{\star}(a,b)>=<h_{q}(a,c),b>
\]

- If $X$ is a normal vector field along $\gamma$ : 
\[
\varPi_{\gamma(t)}\left(\frac{dX(t)}{dt}\right)=-h_{\gamma(t)}^{\star}\left(\frac{d\gamma(t)}{dt},X(t)\right)
\]
\end{prop}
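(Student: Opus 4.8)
The plan is to deduce both assertions from two elementary identities satisfied by the projector family, namely idempotency $\varPi_{q}^{2}=\varPi_{q}$ and symmetry $\varPi_{q}^{T}=\varPi_{q}$, after differentiating them along $M$; for the second bullet one differentiates, in addition, the defining relation of a normal vector field.

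I would begin with a preliminary observation: since $q\mapsto\varPi_{q}$ takes values in symmetric matrices, each directional derivative $d\varPi_{q}(a)$ (for $a\in T_{q}M$) is again symmetric, because differentiating the identity $\langle\varPi_{q}x,y\rangle=\langle x,\varPi_{q}y\rangle$ in the direction $a$ gives $\langle d\varPi_{q}(a)x,y\rangle=\langle x,d\varPi_{q}(a)y\rangle$ for all $x,y\in\mathbb{R}^{d}$. Next, to check that $h_{q}^{\star}$ is well defined, i.e. that $d\varPi_{q}(a)b\in T_{q}M$ whenever $b\in(T_{q}M)^{\bot}$, I would differentiate $\varPi_{q}^{2}=\varPi_{q}$ in the direction $a$ to obtain $\varPi_{q}\,d\varPi_{q}(a)+d\varPi_{q}(a)\,\varPi_{q}=d\varPi_{q}(a)$, hence $\varPi_{q}\,d\varPi_{q}(a)=d\varPi_{q}(a)(I_{d}-\varPi_{q})$; applying both sides to $b$ and using $(I_{d}-\varPi_{q})b=b$ yields $\varPi_{q}\big(d\varPi_{q}(a)b\big)=d\varPi_{q}(a)b$, so $h_{q}^{\star}(a,b)\in T_{q}M$ as claimed. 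With the symmetry of $d\varPi_{q}(a)$ in hand, the adjoint relation of the first bullet is then immediate: for $a,c\in T_{q}M$ and $b\in(T_{q}M)^{\bot}$,
\[
\langle h_{q}(a,c),b\rangle=\langle d\varPi_{q}(a)c,b\rangle=\langle c,d\varPi_{q}(a)b\rangle=\langle c,h_{q}^{\star}(a,b)\rangle .
\]

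For the Gauss--Weingarten identity, the idea is to differentiate along $t$ the fact that $X$ is normal to $M$ along $\gamma$. Since $X(t)\in(T_{\gamma(t)}M)^{\bot}$ for every $t$, we have $\varPi_{\gamma(t)}X(t)=0$ identically in $t$. Differentiating with the product rule, and using the chain rule $\frac{d}{dt}\varPi_{\gamma(t)}=d\varPi_{\gamma(t)}\big(\tfrac{d\gamma(t)}{dt}\big)$, gives
\[
d\varPi_{\gamma(t)}\!\Big(\tfrac{d\gamma(t)}{dt}\Big)X(t)+\varPi_{\gamma(t)}\Big(\tfrac{dX(t)}{dt}\Big)=0,
\]
which is precisely $\varPi_{\gamma(t)}\big(\tfrac{dX(t)}{dt}\big)=-h_{\gamma(t)}^{\star}\big(\tfrac{d\gamma(t)}{dt},X(t)\big)$.

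I do not anticipate a genuine obstacle: the argument is just a sequence of differentiations of the relations $\varPi^{2}=\varPi$, $\varPi^{T}=\varPi$ and $\varPi_{\gamma(t)}X(t)=0$. The only point requiring a moment of care is the well-definedness of $h_{q}^{\star}$ — that $d\varPi_{q}(a)b$ is tangent rather than merely an element of $\mathbb{R}^{d}$ — which is why I isolate the differentiated idempotency identity first; once that is settled, everything reduces to the symmetry of $d\varPi_{q}(a)$ and the product rule, and one only needs to keep track of which arguments are tangent and which are normal so that the known identities $d\varPi_{q}(v)w=d\varPi_{q}(w)v\in(T_{q}M)^{\bot}$ (for tangent $v,w$) are used in the correct slots.
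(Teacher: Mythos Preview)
Your proof is correct and complete. The paper, however, does not actually supply a proof of this proposition: it introduces the statement with ``We recall the following classical formula'' and then moves on, treating it as background borrowed from \cite{RS}. So there is no paper proof to compare against; your argument via differentiation of $\varPi_{q}^{2}=\varPi_{q}$, $\varPi_{q}^{T}=\varPi_{q}$, and $\varPi_{\gamma(t)}X(t)=0$ is exactly the standard derivation and fills the gap the paper leaves.
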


\subsection{\label{subsec:Projection-on-a}Projection on a submanifold $M$ and
its differential}

Since $M$ is a closed subset of $\mathbb{R}^{d}$, the distance function
$d(.,M)$ is Lipschitz continuous. Moreover, if we consider the normal
bundle $NM=\underset{q\in M}{\bigcup}\{q\}\times(T_{q}M)^{\bot}$,
as above the Tubular Neighborhood Theorem ensures there exists a continuous
positive function on $M$, $\theta:\,q\rightarrow\theta(q)$ such
that if $N_{\theta}=\{(q,n)\in NM/\left\Vert n\right\Vert <\theta(q)\}$
then : 
\begin{eqnarray*}
\psi: & N_{\theta}\longrightarrow\mathbb{R}^{d}\\
 & (q,n)\longrightarrow q+n
\end{eqnarray*}

is a smooth diffeomorphism onto its open image $U_{\theta}=\psi(N_{\theta})$.

If $\pi_{1}:\,NM\longrightarrow M$ ~is the canonical projection,
then for $q\in U_{\theta}$, $\tilde{q}=\pi_{1}\circ\psi^{-1}(q)$
is the unique point in $M$ such that $d(q,M)=\left\Vert q-\tilde{q}\right\Vert $.
This shows that $q\rightarrow d(q,M)$ is smooth on the open set $U_{\theta}\setminus M$.

On $U_{\theta}$, we can define the projection on $M$ as : 
\begin{align*}
\pi_{M}:U_{\theta} & \rightarrow M\\
q & \rightarrow\tilde{q}=\pi_{1}\circ\psi^{-1}(q)
\end{align*}

We denote as before $v(q)=d\pi_{M}(q)_{|T_{\tilde{q}}M}$ . We know
that $v(q)$ is an automorphism of $T_{\tilde{q}}M$. Moreover: 
\begin{prop}[Properties of v(q)]
\label{prop:v inv q}

$\forall q\in U_{\theta}$ , $\forall p\in T_{\tilde{q}}M$, $v^{-1}(q)(p)-p=-h_{\tilde{q}}^{\star}(p,q-\tilde{q})$
with $\tilde{q}=\pi_{M}(q)$ . Moreover, $v(q)$ is a self-adjoint
automorphism of $T_{\tilde{q}}M$ i.e $\forall(p_{1},p_{2})\in\left(T_{\tilde{q}}M\right)^{2}$:
\[
<v(q)(p_{1}),p_{2}>=<p_{1},v(q)(p_{2})>
\]
\end{prop}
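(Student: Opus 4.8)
The plan is to obtain the formula for $v^{-1}(q)$ by differentiating $\pi_M$ along a carefully chosen curve through $q$, and then to deduce self-adjointness of $v(q)$ from that formula together with the adjointness relation between $h_q$ and $h_q^{\star}$ recorded in the Gauss--Weingarten Proposition.

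First, fix $q\in U_{\theta}$ and set $\tilde{q}=\pi_{M}(q)$, $n=q-\tilde{q}\in(T_{\tilde{q}}M)^{\bot}$. Given $p\in T_{\tilde{q}}M$, I would pick a smooth curve $\gamma:(-\varepsilon,\varepsilon)\to M$ with $\gamma(0)=\tilde{q}$ and $\dot{\gamma}(0)=p$, and set $N(s)=(I_{d}-\varPi_{\gamma(s)})(n)$. Then $N(s)\in(T_{\gamma(s)}M)^{\bot}$ with $N(0)=n$ (since $n$ is already normal at $\tilde{q}$), and $\|N(s)\|\le\|n\|<\theta(\tilde{q})$, so by continuity of $\theta$ the pair $(\gamma(s),N(s))$ lies in $N_{\theta}$ for $\varepsilon$ small. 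Hence $c(s):=\psi(\gamma(s),N(s))=\gamma(s)+N(s)$ is a curve in $U_{\theta}$ with $c(0)=q$; and because $\psi$ is a diffeomorphism, $\psi^{-1}(c(s))=(\gamma(s),N(s))$, so $\pi_{M}(c(s))=\gamma(s)$ for all $s$.

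Differentiating at $s=0$: since $n$ is constant, $\dot{N}(0)=-d\varPi_{\tilde{q}}(p)(n)=-h_{\tilde{q}}^{\star}(p,n)$, which lies in $T_{\tilde{q}}M$ (the codomain of $h_{\tilde q}^{\star}$ in the Gauss--Weingarten Proposition). Therefore $\dot{c}(0)=p-h_{\tilde{q}}^{\star}(p,n)\in T_{\tilde{q}}M$. On the other hand, differentiating $\pi_{M}\circ c=\gamma$ gives $d\pi_{M}(q)(\dot{c}(0))=\dot{\gamma}(0)=p$; since $\dot{c}(0)\in T_{\tilde{q}}M$ and $v(q)=d\pi_{M}(q)|_{T_{\tilde{q}}M}$ is a (previously established) automorphism of $T_{\tilde{q}}M$, this forces $v^{-1}(q)(p)=\dot{c}(0)=p-h_{\tilde{q}}^{\star}(p,q-\tilde{q})$, which is the first assertion.

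It then remains to show that $p\mapsto h_{\tilde{q}}^{\star}(p,n)$ is a self-adjoint endomorphism of $T_{\tilde{q}}M$: this gives that $v^{-1}(q)=\mathrm{Id}-h_{\tilde{q}}^{\star}(\cdot,n)$ is self-adjoint, and the inverse of a self-adjoint automorphism is again self-adjoint, whence the claim for $v(q)$. For $p_{1},p_{2}\in T_{\tilde{q}}M$, the adjointness relation $<c,h_{q}^{\star}(a,b)>=<h_{q}(a,c),b>$ combined with the symmetry of $h_{q}$ yields $<p_{1},h_{\tilde{q}}^{\star}(p_{2},n)>=<h_{\tilde{q}}(p_{2},p_{1}),n>=<h_{\tilde{q}}(p_{1},p_{2}),n>=<p_{2},h_{\tilde{q}}^{\star}(p_{1},n)>$, as required. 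I expect the only genuine care to lie in the bookkeeping of which Euclidean subspace each vector belongs to — in particular in checking that $\dot{N}(0)=-h_{\tilde{q}}^{\star}(p,n)$ is tangent to $M$ at $\tilde{q}$, so that $\dot{c}(0)$ is a legitimate argument of $v^{-1}(q)$ and injectivity of $v(q)$ can be invoked — rather than in any real analytic difficulty.
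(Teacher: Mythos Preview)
Your argument is correct and follows essentially the same route as the paper: build a curve in $U_{\theta}$ that projects under $\pi_M$ to a curve $\gamma$ in $M$ by transporting the normal vector $n=q-\tilde q$ along $\gamma$, differentiate the identity $\pi_M\circ c=\gamma$ at $0$, and then deduce self-adjointness from the adjointness relation between $h$ and $h^{\star}$ together with the symmetry of $h$. The only cosmetic difference is that you differentiate $N(s)=(I_d-\varPi_{\gamma(s)})n$ directly and observe that $\dot N(0)=-h^{\star}_{\tilde q}(p,n)\in T_{\tilde q}M$, whereas the paper extracts the tangential part of the derivative of its normal field by invoking the Gauss--Weingarten formula.
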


\begin{proof}
Let $q\in U_{\theta}$, $\tilde{q}=\pi_{M}(q)$, $p\in T_{\tilde{q}}M$
, $\alpha>0$ and $\gamma:(-\alpha,\alpha)\rightarrow M$ a smooth
path such that $\gamma(0)=\tilde{q}$, $\dot{\gamma}(0)=p$. We define
\begin{align*}
X:(-\alpha,\alpha) & \rightarrow\mathbb{R}^{d}\\
t & \rightarrow\gamma(t)+q-\tilde{q}-\varPi_{\gamma(t)}(\gamma(t)+q-\tilde{q})
\end{align*}

So $X$ is a normal vector field along $\gamma$. Moreover for $0<\alpha^{'}<\alpha$
sufficiently small, $\forall t\in(-\alpha^{'},\alpha^{'})$, $\gamma(t)+X(t)\in U_{\theta}$
by continuity. We have then : 
\[
\forall t\in(-\alpha^{'},\alpha^{'}),\pi_{M}\left(\gamma(t)+X(t)\right)=\gamma(t)
\]

By differentiating at $t=0$ : 
\[
v(q)(p+\varPi_{\tilde{q}}\dot{X}(t))=p
\]

By the Gauss-Weingarten formula applied to $X$, we obtain : 
\[
v^{-1}(q)(p)-p=-h_{\tilde{q}}^{\star}(p,q-\tilde{q})
\]

To prove the last point, let $(a,b)\in\left(T_{\tilde{q}}M\right)^{2}$,
we have : 
\begin{align*}
<v^{-1}(q)(a),b>-<a,b> & =-<h_{\tilde{q}}^{\star}(a,q-\tilde{q}),b>\\
 & =-<q-\tilde{q},h_{\tilde{q}}(a,b)>
\end{align*}

Then by symmetry of $h_{\tilde{q}}$ : 
\begin{align*}
<v^{-1}(q)(a),b>-<a,b> & =-<q-\tilde{q},h_{\tilde{q}}(b,a)>\\
 & =-<h_{\tilde{q}}^{\star}(b,q-\tilde{q}),a>\\
 & =<v^{-1}(q)(b),a>-<b,a>
\end{align*}

So $v^{-1}(q)$ is self-adjoint and therefore $v(q)$ is too. 
\end{proof}

\subsection{Change of coordinates and the Hamilton-Jacobi equation}

We need first a simple lemma about change of variables for solutions
of the Hamilton-Jacobi equation, that allows us to characterize viscosity
solutions on a submanifold $M$. 
\begin{lem}
\label{lem:Change-of-variables}Change of variables

1.Let $\phi:U\rightarrow V$ be a smooth diffeomorphism between open
sets $U,\,V\subset\mathbb{R}^{d}$. We define $\hat{H}$ on $U\times\mathbb{R}^{d}$
by $\hat{H}(q,p)=H(\hat{q},\,^{t}d\phi^{-1}(\hat{q}),p)$ where $\hat{q}=\phi(q)$
and $^{t}d\phi^{-1}(\hat{q})$ is the transpose of the matrix $d\phi^{-1}(\hat{q})$.
And if $u:\,I\times V\rightarrow\mathbb{R}$ is a function, we define
$\hat{u}$ on $I\times U$ as its pullback by $\phi$ with respect
to the second coordinate (i.e $\hat{u}(t,q)=u(t,\phi(q))$. We have
then:

$\hat{u}$ is a classical solution $(t_{0},q_{0})\in I\times U$ (respectively
a viscosity solution) of the Hamilton-Jacobi equation with Hamiltonian
$\hat{H}$ if and only if $u$ is a classical solution at $(t_{0},\phi(q_{0}))\in I\times V$
(respectively a viscosity solution) of the Hamilton-Jacobi equation
with Hamiltonian $H$.

2. if $H$ is a Hamiltonian function and $M$ a smooth submanifold
of $\mathbb{R}^{d}$, let $q_{0}\in M$:

$\forall p\in\mathbb{R}^{d}$ $H(q_{0},p)=H(q_{0},\Pi_{q_{0}}(p))$
~if and only if there exists open sets $U,\,V\subset\mathbb{R}^{d}$,
with $q_{0}\in V$ and an adapted chart $\phi:U\rightarrow V$ with
$\phi(0)=q_{0}$, such that $\hat{H}$ defined as above verifies $\forall p\in\mathbb{R}^{d}$~$\hat{H}(0,p)=\hat{H}(0,p_{T})$
where $p_{T}$ is the orthogonal projection of $p$ on $\mathbb{R}^{m}\times\{0_{d-m}\}$. 
\end{lem}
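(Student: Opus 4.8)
For this change-of-variables lemma, the plan is to prove the two parts in turn; both reduce to the chain rule together with elementary linear algebra of transposes and orthogonal projectors, so there is no serious analytic difficulty.

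For part~1 I would begin from the chain rule. Writing $\hat q=\phi(q)$, one has $\partial_t\hat u(t,q)=\partial_t u(t,\hat q)$ and $\nabla_q\hat u(t,q)={}^td\phi(q)\,\nabla u(t,\hat q)$; since $d\phi^{-1}(\hat q)=(d\phi(q))^{-1}$, the very definition of $\hat H$ gives the pointwise identity
\[
\partial_t\hat u(t,q)+\hat H\bigl(q,\nabla_q\hat u(t,q)\bigr)=\partial_t u(t,\hat q)+H\bigl(\hat q,\nabla u(t,\hat q)\bigr),
\]
because the matrix ${}^td\phi^{-1}(\hat q)\,{}^td\phi(q)$ occurring inside $H$ is the identity. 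This settles the classical case at once ($(t_0,q_0)$ on the left, $(t_0,\phi(q_0))$ on the right). For the viscosity case I would invoke the standard test-function transfer: if $\psi$ is $C^1$ and $\hat u-\psi$ has a local extremum at $(t_0,q_0)$, then $\tilde\psi(t,y):=\psi(t,\phi^{-1}(y))$ is $C^1$ and $u-\tilde\psi$ has a local extremum of the same type at $(t_0,\phi(q_0))$, and conversely; evaluating the displayed identity with $\psi$ and $\tilde\psi$ at the corresponding points shows that the subsolution (resp.\ supersolution) inequality for $\hat H$ at $(t_0,q_0)$ is literally the subsolution (resp.\ supersolution) inequality for $H$ at $(t_0,\phi(q_0))$. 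Hence $\hat u$ is a viscosity solution at $(t_0,q_0)$ iff $u$ is one at $(t_0,\phi(q_0))$. The only care needed is keeping track of which Jacobian ($d\phi$ or $d\phi^{-1}$) is evaluated at which base point, so that the transposes cancel.

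For part~2 I would fix an adapted chart $\phi:U\to V$ with $\phi(0)=q_0$ — such a chart exists because $M$ is an embedded submanifold — and recall that ``adapted'' means $\phi\bigl(U\cap(\mathbb{R}^m\times\{0\})\bigr)=V\cap M$, equivalently that $d\phi(0)$ maps $\mathbb{R}^m\times\{0\}$ onto $T_{q_0}M$. The key linear-algebra observation, used in both directions, is
\[
{}^td\phi^{-1}(q_0)\bigl(\{0\}\times\mathbb{R}^{d-m}\bigr)=(T_{q_0}M)^{\perp},
\]
which follows from $({}^tA\,W)^{\perp}=A^{-1}(W^{\perp})$ applied to $A=d\phi^{-1}(q_0)$ and $W=\{0\}\times\mathbb{R}^{d-m}$, since $A^{-1}(W^{\perp})=d\phi(0)(\mathbb{R}^m\times\{0\})=T_{q_0}M$. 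Now $\hat H(0,p)=H\bigl(q_0,{}^td\phi^{-1}(q_0)p\bigr)$ and $p-p_T\in\{0\}\times\mathbb{R}^{d-m}$, hence ${}^td\phi^{-1}(q_0)(p-p_T)\in(T_{q_0}M)^{\perp}$. For the ``only if'' direction, $H(q_0,\cdot)=H(q_0,\Pi_{q_0}(\cdot))$ means $H(q_0,\cdot)$ is constant along $(T_{q_0}M)^{\perp}$, which immediately yields $\hat H(0,p)=\hat H(0,p_T)$ for every $p$. For the ``if'' direction, I would observe that as $p$ ranges over a fixed class $p_T+\{0\}\times\mathbb{R}^{d-m}$ the vector ${}^td\phi^{-1}(q_0)p$ ranges over the whole coset ${}^td\phi^{-1}(q_0)p_T+(T_{q_0}M)^{\perp}$, while as $p_T$ ranges over $\mathbb{R}^m\times\{0\}$ the vectors ${}^td\phi^{-1}(q_0)p_T$ sweep out a linear complement of $(T_{q_0}M)^{\perp}$; thus the hypothesis $\hat H(0,p)=\hat H(0,p_T)$ for all $p$ says exactly that $H(q_0,\cdot)$ is constant on every coset of $(T_{q_0}M)^{\perp}$, i.e.\ $H(q_0,p')=H(q_0,\Pi_{q_0}(p'))$ for all $p'$.

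I expect the main obstacle to be organizational rather than conceptual. In part~1 it is the bookkeeping with the two transposes and their base points; in part~2 it is resisting the (false) temptation to claim that ${}^td\phi^{-1}(q_0)$ sends $\mathbb{R}^m\times\{0\}$ onto $T_{q_0}M$ — it sends it only onto some complement of $(T_{q_0}M)^{\perp}$, which is nevertheless exactly what is needed to parametrize the cosets of $(T_{q_0}M)^{\perp}$ and finish.
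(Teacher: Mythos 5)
Your proposal is correct and follows essentially the same route as the paper: for part 1 the chain rule plus transpose bookkeeping (with the standard test-function transfer in the viscosity case, which the paper merely asserts), and for part 2 the same adjointness computation --- your key observation that ${}^{t}d\phi^{-1}(q_{0})$ maps $\{0\}\times\mathbb{R}^{d-m}$ onto $(T_{q_{0}}M)^{\perp}$ is just a reformulation of the paper's identity $\varPi_{q_{0}}({}^{t}d\phi^{-1}(q_{0})p)=\varPi_{q_{0}}({}^{t}d\phi^{-1}(q_{0})p_{T})$. Your coset argument for the converse is somewhat more explicit than the paper's remark that one may ``swap the roles of $H$ and $\hat{H}$'', but it rests on the same symmetry.
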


\begin{proof}
1. In case of a classical solution, since $u$ is differentiable at
$(t_{0},\phi(q_{0}))\in I\times V$ then by the chain rule : 
\[
d_{q}u(t_{0},\hat{q}_{0})\circ d\phi(q_{0})=d_{q}\hat{u}(t_{0},q_{0})
\]

Let $j:\mathbb{R}^{d}\rightarrow(\mathbb{R}^{d})^{\star}$ be the
canonical isomorphism associated to the canonical scalar product.
Then for $p\in(\mathbb{R}^{d})^{\star}$ and $A$ a matrix of $\mathbb{R}^{d\times d}$
identified to its associated endomorphism of $\mathbb{R}^{d}$, we
have 
\[
j^{-1}(p\circ A)=^{t}A(j^{-1}(p))
\]

Applying to the above : 
\[
^{t}d\phi(q_{0})\nabla_{q}u(t_{0},\hat{q}_{0})=\nabla_{q}\hat{u}(t_{0},q_{0})
\]

Because $u$ solves the Hamilton-Jacobi equation at $(t_{0},q_{0})$
with Hamiltonian $H$, we have

\begin{eqnarray*}
\partial_{t}u(t_{0},\phi(q_{0})) & = & \partial_{t}\hat{u}(t_{0},q_{0})=-H(\phi(q_{0}),\nabla_{q}u(t_{0},\phi(q_{0}))\\
 & = & -\hat{H}(q_{0},^{t}d\phi(q_{0})\nabla_{q}u(t_{0},\hat{q}_{0})\\
 & = & -\hat{H}(q_{0},\nabla_{q}\hat{u}(t_{0},q_{0}))
\end{eqnarray*}

For viscosity solutions, the proof is exactly the same by using test
functions instead.

2. For the if part : If $U,\,V$ are open subsets of $\mathbb{R}^{d}$
and $\phi:U\rightarrow V$ is an adapted local chart for $M$ around
$q_{0}\in V\bigcap M$. Moreover for $p\in\mathbb{R}^{d}$, let $p_{T}$
be its orthogonal projection on $\mathbb{R}^{m}\times\{0_{d-m}\}$,
then for any $y\in T_{q_{0}}M$ : 
\[
<y,^{t}d\phi^{-1}(q_{0})p>=<d\phi^{-1}(q_{0})y,p>
\]

Since $d\phi^{-1}(q_{0})$ is an isomorphism between $T_{q_{0}}M$
and $\mathbb{R}^{m}\times\{0_{d-m}\}$, we obtain that $d\phi^{-1}(q_{0})y\in\mathbb{R}^{m}\times\{0_{d-m}\}$
and therefore : 
\[
<y,^{t}d\phi^{-1}(q_{0})p>=<d\phi^{-1}(q_{0})y,p_{T}>=<y,^{t}d\phi^{-1}(q_{0})p_{T}>
\]

This being true for all $y\in T_{q_{0}}M$, it shows that : 
\[
\varPi_{q_{0}}({}^{t}d\phi^{-1}(q_{0})p)=\varPi_{q_{0}}({}^{t}d\phi^{-1}(q_{0})p_{T})
\]

From this, by definition of $\hat{H}$ and by assumption on $H$ :
\[
\hat{H}(0,p)=H(q_{0},{}^{t}d\phi^{-1}(q_{0})p)=H(q_{0},{}^{t}d\phi^{-1}(q_{0})p_{T})=\hat{H}(0,p_{T})
\]

For the only if part, we can proceed exactly in the same way and swap
around in this proof the role of $H$ and $\hat{H}$. 
\end{proof}

\section{\label{sec:Restriction-of-solutions}Restriction of solutions when
$M\times\mathbb{R}^{d}$ is invariant}

\subsection{Classical solutions}

\subsubsection{Restrictions}

If $i\,:M\rightarrow\mathbb{R}^{d}$ is the canonical injection which
is a smooth embedding since $M$ is a submanifold.

We denote for $q\in M$,~the linear map $di(q)\,:T_{q}M\rightarrow T_{q}\mathbb{R}^{d}\simeq\mathbb{R}^{d}$
which is the canonical injection of $T_{q}M$ as a subvector space
of $\mathbb{R}^{d}$. Then the transpose of this linear map is $\pi_{q}:\,\left(\mathbb{R}^{d}\right)^{\star}\rightarrow T_{q}^{\star}M$
which is the map sending linear forms on $\mathbb{R}^{d}$ to their
restriction to $T_{q}M$. 
\begin{proof}
\emph{of Theorem \eqref{thm:Restrictions-and-extensions} a) in case
of classical solutions}

We consider a differentiable solution $u$ on $I\times\mathbb{R}^{d}$
of $(\mathscr{CP)}$ and $\bar{u}=u_{|I\times M}$. For $(t,q)\in I\times M$,
$\bar{u}(t,q)=u(t,i(q))$ ~where $i$ ~is the canonical embedding
$i:\,M\hookrightarrow\mathbb{R}^{d}$. We have by the chain rule and
by definition of $\pi_{q}$:

\begin{equation}
\forall\,(t,q)\in I\times M\,d_{q}\bar{u}(t,q)=d_{q}u(t,q)\circ di(q)=\pi_{q}(d_{q}u(t,q))\label{eq:diff u restr}
\end{equation}

This shows that $\forall\,(t,q)\in I\times M,\,\nabla_{q}\bar{u}(t,q)=\Pi_{q}(\nabla_{q}u(t,q))$.
Moreover, since $u$ is a solution of $(\mathscr{CP)}$, 
\[
\forall\,(t,q)\in I\times M,\,\partial_{t}\bar{u}(t,q)=\partial_{t}u(t,i(q))=-H(q,\nabla_{q}u(t,q))
\]

By the invariance of $M\times\mathbb{R}^{d}$, $H(q,\nabla_{q}u(t,q))=H(q,\Pi_{q}(\nabla_{q}u(t,q)))$
, which by definition of $\bar{H}$ : 
\[
\forall\,(t,q)\in I\times M,\,\partial_{t}\bar{u}(t,q)=-\bar{H}(q,\nabla_{q}\bar{u}(t,q))
\]
\end{proof}

\subsubsection{Extensions}
\begin{proof}
\emph{of Theorem \eqref{thm:Restrictions-and-extensions} b) in case
of classical solutions}

We have by the chain rule that 
\[
\partial_{q}u(t,q)=\partial_{q}\bar{u}(t,\pi_{M}(q)\circ d(\pi_{M}(q))+2a<q-\tilde{q},.>
\]
So 
\[
\partial_{q}u(t,q)|_{T_{\tilde{q}}M}=\partial_{q}\bar{u}(t,\pi_{M}(q))\circ v(q)
\]

Therefore : 
\begin{align}
\varPi_{\tilde{q}}\nabla u(t,q) & =v^{\ast}(q)\nabla_{q}\bar{u}(t,\pi_{M}(q))\nonumber \\
 & =v(q)\nabla_{q}\bar{u}(t,\pi_{M}(q))\,\tag{\ensuremath{v(q)}is self-adjoint}\label{eq:grad u on TqM}
\end{align}

We compute then: 
\begin{eqnarray*}
\partial_{t}u(t,q) & = & \partial_{t}\bar{u}(t,\pi_{M}(q))\\
 & = & -\bar{H}(\tilde{q},\nabla_{q}\bar{u}(t,\tilde{q}))\\
 & = & -\bar{H}(\tilde{q},v^{-1}(q)\varPi_{\tilde{q}}\nabla u(t,q))\\
 & = & -H(q,\nabla u(t,q))
\end{eqnarray*}
\end{proof}

\subsection{Viscosity solutions}

\subsubsection{Restrictions}
\begin{proof}
\emph{of Theorem (\ref{thm:Restrictions-and-extensions}) a)}

By choosing a local chart on $M$ and by using the lemma (\ref{lem:Change-of-variables}),
we know that the assumption of independence on normal moments is invariant
by change of variables. It is sufficient to consider the case $M=\mathbb{R}^{m}\times\{0_{d-m}\}$
with a Hamiltonian $H$ verifying for some $q_{0}\in M$ : 
\begin{equation}
\forall p\in\mathbb{R}^{d}\,H(q_{0},p)=H(q_{0},\pi_{M}(p))\label{eq:invariance normal moments at qo}
\end{equation}

We are going to show that if $u$ is a viscosity solution for the
Hamilton-Jacobi equation with Hamiltonian $H$ in a neighborhood of
$(t_{0},q_{0})\in\mathbb{R}\times M$, then $\bar{u}=u|_{\mathbb{R}\times M}$
is s a viscosity solution for the Hamilton-Jacobi equation with Hamiltonian
$\bar{H}$ at $(t_{0},q_{0})$.

Let us consider $\phi_{1}:\mathbb{R}\times M\longrightarrow\mathbb{R}$
of class $C^{1}$ such that $\phi_{1}-u$ has a strict local minimum
at $(t_{0},q_{0})$ and ~$\phi_{1}(t_{0},q_{0})=u(q_{0})$ . Let
$r>0$ such that if $B_{m+1}((t_{0},q_{0}),r)$ is the Euclidean ball
in $\mathbb{\mathbb{R}\times R}^{m}\times\{0_{d-m}\}$ centered at
$(t_{0},q_{0})$ of radius $r>0$ then $\phi_{1}>\bar{u}$, on $B_{m+1}((t_{0},q_{0}),r)$$\setminus\{(t_{0},q_{0})\}$.

If $q\in\mathbb{R}^{d}$, we denote $q_{M}=\pi_{M}(q)$ and $q_{d}=q-\pi_{M}(q)$,
where $\pi_{M}$ is the orthogonal projector on $M$.

$u$ is Lipschitz on $B_{d+1}((t_{0},q_{0}),1)$ and let $L$ be its
Lipschitz constant.

We consider $\epsilon>0$ with $\epsilon<\min(1,r)$, and we define
then the $C^{1}$ function $\phi_{\epsilon}:U=B_{m+1}((t_{0},q_{0}),r)\times\mathbb{R}^{d-m}\longrightarrow\mathbb{R}$
as : 
\[
\phi_{\epsilon}(t,q)=\phi_{1}(t,q_{M})+\frac{2L+\alpha(\epsilon)}{\epsilon}\left\Vert q_{d}\right\Vert ^{2}
\]

Where $\alpha(\epsilon)=\frac{\epsilon L^{2}}{4\mu(\epsilon)}$ and
$\mu(\epsilon)=\inf\left\{ \phi_{1}(t,q)-u(t,q)\,/q\in M,\left\Vert (t,q)\right\Vert \leq\epsilon,\left\Vert (t,q)\right\Vert \geq\frac{\sqrt{3}}{2}\epsilon\right\} $

Let $m_{\epsilon}=\inf\{\phi_{\epsilon}(t,q)-u(t,q)/(t,q)\in\bar{B}_{d}((t_{0},q_{0}),\epsilon)\}$
. By compacity of $\bar{B}_{d}((t_{0},q_{0}),\epsilon)$ , we have
$(t_{\epsilon},q_{\epsilon})\in\bar{B}_{d}((t_{0},q_{0}),\epsilon)$
that verifies $\phi_{\epsilon}(t_{\epsilon},q_{\epsilon})-u(t_{\epsilon},q_{\epsilon})=m_{\epsilon}$.
Notice that $m_{\epsilon}\leq0$, since $\phi_{\epsilon}(t_{0},q_{0})=u(t_{0},q_{0})$

Then either $q_{\epsilon}\in M$ and in this case $q_{\epsilon}=q_{0}$,
$t_{\epsilon}=t_{0}$, because $\phi_{1}-\bar{u}$ has a attains its
minimum on a unique point in $\bar{B}_{m}((t_{0},q_{0}),\epsilon)$.
So in this case $\phi_{\epsilon}-u$ has a local minimum at $(t_{0},q_{0})$
so $\phi_{\epsilon}$ is a sub-solution at $(t_{0},q_{0})$ of the
Hamilton-Jacobi with Hamiltonian $H$. Since $\pi_{M}(\nabla_{q}\phi_{\epsilon}(t_{0},q_{0}))=\nabla_{q}\phi_{1}(t_{0},q_{0})$
, we obtain that $\phi_{1.}$is a sub-solution at $(t_{0}$ of the
stationary Hamilton-Jacobi with Hamiltonian $\bar{H}$.

If $q_{\epsilon}\notin M$ , we show first that $\left\Vert (t_{\epsilon},q_{\epsilon})-(t_{0},q_{0})\right\Vert <\epsilon$.
We denote $q_{\epsilon,M}=\pi_{M}(q_{\epsilon})$ and $q_{\epsilon,N}=q_{\epsilon}-\pi_{M}(q_{\epsilon})$.
First of all we have : 
\begin{eqnarray*}
m_{\epsilon} & = & \phi_{1}(t_{\epsilon},q_{\epsilon,M})-u(t_{\epsilon},q_{\epsilon,M})+u(t_{\epsilon},q_{\epsilon,M})-u(t_{\epsilon},q_{\epsilon})+\frac{2L+\alpha(\epsilon)}{\epsilon}\left\Vert q_{\epsilon,N}\right\Vert ^{2}\\
0 & > & -L\left\Vert q_{\epsilon,N}\right\Vert +\frac{2L+\alpha(\epsilon)}{\epsilon}\left\Vert q_{\epsilon,N}\right\Vert ^{2}
\end{eqnarray*}

So $\left\Vert q_{\epsilon,N}\right\Vert <\frac{\epsilon L}{2L+\alpha}<\frac{\epsilon}{2}$.
Also we must have $\left\Vert (t_{\epsilon},q_{\epsilon,M})\right\Vert <\frac{\sqrt{3}}{2}\epsilon$
, if not we would have : 
\begin{eqnarray*}
0>m_{\epsilon} & \geq & \mu(\epsilon)-L\left\Vert q_{\epsilon,N}\right\Vert +\frac{2L+\alpha}{\epsilon}\left\Vert q_{\epsilon,N}\right\Vert ^{2}\\
 & \geq & \mu(\epsilon)-L\frac{\epsilon L}{2(2L+\alpha)}+\frac{2L+\alpha}{4\epsilon}\left(\frac{\epsilon L}{2L+\alpha}\right)^{2}\\
 & \geq\frac{\epsilon L^{2}}{4\alpha} & -\frac{\epsilon L^{2}}{4(2L+\alpha)}>0
\end{eqnarray*}

which is impossible. Therefore $\left\Vert q_{\epsilon}-q_{0}\right\Vert <\epsilon$
and $\phi_{\epsilon}-u$ has a local minimum at an interior point
$(t_{\epsilon},q_{\epsilon})\in B_{d+1}((t_{0},q_{0}),\epsilon)$.

Since $u$ is a viscosity solution, $\phi_{\epsilon}$ is a sub-solution
at $(t_{\epsilon},q_{\epsilon})$ of the Hamilton-Jacobi equation,
i.e $\partial_{t}\phi_{1}(t_{\epsilon},q_{\epsilon})+H(q_{\epsilon},\nabla_{q}\phi_{\epsilon}(t_{\epsilon},q_{\epsilon}))\leq0$
. We also have 
\[
\nabla_{q}\phi_{\epsilon}(t_{\epsilon},q_{\epsilon})=\nabla_{q}\phi_{1}(t_{\epsilon},q_{\epsilon,M})+\frac{4L+2\alpha(\epsilon)}{\epsilon}q_{\epsilon,N}
\]

Therefore we can construct a sequence $(t_{p},q_{p})\in B_{d+1}((t_{0},q_{0}),\frac{1}{p})$
such that $\partial_{t}\phi_{1}(t_{p},q_{p})+H(q_{p},\nabla_{q}\phi_{\frac{1}{p}}(t_{p},q_{p}))\leq0$
and 
\[
\nabla_{q}\phi_{\frac{1}{p}}(t_{p},q_{p})=\nabla_{q}\phi_{1}(t_{p},q_{p,M})+2(2L+\alpha_{p})pq_{p,N}
\]

But as seen above $(2L+\alpha_{p})p\left\Vert q_{p}-\pi_{M}(q_{p})\right\Vert \leq L$
, so it has convergent subsequence to some $n\in N$ and at the limit
\[
\partial_{t}\phi_{1}(t_{0},q_{0})+H(q_{0},\nabla_{q}\phi_{1}(t_{0},q_{0})+n)\leq0
\]

Since $H(q_{0},\nabla_{q}\phi_{1}(t_{0},q_{0})+n)=H(q_{0},\nabla_{q}\phi_{1}(t_{0},q_{0}))$
by assumption on $H$ , we obtain that $\phi_{1.}$is a sub-solution
at $(t_{0},q_{0})$ of Hamilton-Jacobi with Hamiltonian $\bar{H}$.

This completes the proof. The case of supersolutions can be treated
in the same way by considering with the same notations as above $\phi_{\epsilon}(t,q)=\phi_{1}(t,q_{M})-\frac{2L+\alpha(\epsilon)}{\epsilon}\left\Vert q_{d}\right\Vert ^{2}$
and where $\alpha(\epsilon)=\frac{\epsilon L^{2}}{4\mu(\epsilon)}$
and $\mu(\epsilon)=\inf\left\{ u(t,q)-\phi_{1}(t,q)/q\in M,\left\Vert (t,q)\right\Vert \leq\epsilon,\,\left\Vert (t,q)\right\Vert \geq\frac{\sqrt{3}}{2}\epsilon\right\} $ 
\end{proof}

\subsubsection{Extensions}
\begin{proof}
\emph{of Theorem \eqref{thm:Restrictions-and-extensions} b)}

Let $\phi:\mathbb{R}\times\mathbb{R}^{d}\rightarrow\mathbb{R}$ be
a $C^{1}$ test function such that $\phi\geq u$ in a neighborhood
of $(t_{0},q_{0})$, that is equal to $0$ at $(t_{0},q_{0})$. When
$q_{0}\in M$, we notice that $\phi_{|\mathbb{R}\times M}-\bar{u}$
has then a local minimum at $(t_{0},q_{0})$and therefore : 
\[
\partial_{t}\phi(t_{0},q_{0})+\bar{H}(q_{0},\varPi_{q_{0}}\nabla_{q}\phi(t_{0},q_{0}))\leq0
\]

By definition of $H$ , $\bar{H}(q_{0},\varPi_{q_{0}}\nabla\phi(t_{0},q_{0}))=H(q_{0},\nabla\phi(t_{0},q_{0}))$,
so:

\[
\partial_{t}\phi(t_{0},q_{0})+H(q_{0},\varPi_{q_{0}}\nabla_{q}\phi(t_{0},q_{0}))\leq0
\]

Now if $q_{0}\in U_{\theta}\setminus M$ then $d(q_{0},M)\neq0$ and
$q\rightarrow d(q,M)$ is smooth in a neighborhood of $q_{0}$. We
consider~$r>0$ such that the open ball $B(q_{0},r)\subset U_{\theta}\bigcap M^{c}$
and $\phi\geq u$ in $[t_{0}-r,t_{0}+r]\times B(q_{0},r)$.

Let $Q_{0}=\pi_{M}(q_{0})$, we consider $\varphi:U\supset\{Q_{0}\}\longrightarrow V$
a local chart i.e a smooth diffeomorphim between the open sets $U,\,V\subset\mathbb{R}^{d}$
such that $\psi(U\bigcap M)=V\bigcap(\mathbb{R}^{m}\times\{0_{d-m}\}$
and we denote $\varphi=(\varphi_{1},..,\varphi_{d})$.

We consider then the neighborhood $U_{1}=\pi_{M}(B(q_{0},r))\bigcap U\bigcap M$
of $Q_{0}$ in $M$.

We write : $q_{0}-Q_{0}=\stackrel[i=0]{d-m-1}{\sum}\alpha_{i}\nabla\varphi_{d-i}(Q_{0})$.
We consider the map on $U_{1}$ defined by : 
\[
g:Q\rightarrow Q+\stackrel[i=0]{d-m-1}{\sum}\alpha_{i}\nabla\varphi_{d-i}(Q)
\]
And we set $U_{2}=g^{-1}(B(q_{0},r))$ which is a non-empty (because
it contains $Q_{0}$), open neighborhood of $Q_{0}$ in $M$. Also
it is easy to verify that $g$ is injective.

We define then $h:U_{2}\rightarrow\mathbb{R}^{d}$ by 
\[
h(Q)=Q+(g(Q)-Q)\frac{d(Q_{0},M)}{d(Q,M)}
\]

We have that $h(Q_{0})=q_{0}$ , $h$ is smooth, injective and for
$Q\in U_{2}$, $h(Q)-Q\in T_{Q}M$ and $d(h(Q),M)=\left\Vert h(Q)-Q\right\Vert =\left\Vert Q_{0}-q_{0}\right\Vert $.
Also we recall that since $(t_{0},q_{0})$ is a critical point of
$u-\phi$, and \uline{\mbox{$q\rightarrow u(t_{0},q)$}} is differentiable
along vectors belonging to $(T_{Q_{0}}M)^{\bot}$: 
\begin{equation}
\varPi_{Q_{0}}^{\bot}\nabla_{q}u(t_{0},q_{0})=\varPi_{Q_{0}}^{\bot}\nabla_{q}\phi(t_{0},q_{0})=2a(q_{0}-Q_{0})\label{eq:normal gradient of test function}
\end{equation}

We define now the following $C^{1}$ function in $[t_{0}-r,t_{0}+r]\times U_{2}$:

\begin{equation}
\bar{\phi}(t,Q)=\phi(t,h(Q))-a\left\Vert h(Q)-Q\right\Vert ^{2}\label{eq:definition phibar}
\end{equation}

First we notice that if for $q\in B(q_{0},r)$, $\phi_{1}(t,q)=\phi(t,q)-a\left\Vert q-\pi_{M}(q)\right\Vert ,$then
by (\ref{eq:normal gradient of test function}) : 
\begin{align}
\varPi_{Q_{0}}^{\bot}\nabla_{q}\phi_{1}(t_{0},q_{0}) & =0\label{eq:zero normal grad phi1}\\
\varPi_{Q_{0}}\nabla_{q}\phi_{1}(t_{0},q_{0}) & =\varPi_{Q_{0}}\nabla_{q}\phi(t_{0},q_{0})
\end{align}

And that : 
\[
\bar{\phi}(t,Q)=\phi_{1}(t,h(Q))=\phi(t,h(Q))-a\left\Vert Q_{0}-q_{0}\right\Vert ^{2}
\]

Also :$\bar{\phi}(t_{0},Q_{0})=\bar{u}(t_{0},Q_{0})$ and for $(t,Q)\in[t_{0}-\beta,t_{0}+\beta]\times U_{2}$:
\[
\bar{\phi}(t,Q)\geq u(t,h(Q))-a\left\Vert Q_{0}-q_{0}\right\Vert ^{2}=\bar{u}(t,Q)
\]

And so $\bar{\phi}-\bar{u}$ has a minimum in $[t_{0}-\beta,t_{0}+\beta]\times U_{2}$
at $(t_{0},Q_{0})$.

Since $\bar{u}$ is a viscosity solution of the Hamilton-Jacobi equation
with Hamiltonian $\bar{H}$, $\bar{\phi}$ is a sub-solution at $(t_{0},Q_{0})$
: 
\begin{equation}
\partial_{t}\bar{\phi}(t_{0},Q_{0})+\bar{H}(Q_{0},\nabla_{q}\bar{\phi}(t_{0},Q_{0}))\leq0\label{eq: phi bar sub sol}
\end{equation}

Now we observe that since $\pi_{M}(h(Q))=Q$ , we have 
\[
d\pi_{M}(h(Q))\circ dh(Q)=Id_{T_{Q}M}
\]

So : 
\[
d\pi_{M}(h(Q))\circ\varPi_{Q}\circ dh(Q)=Id_{T_{Q}M}
\]

Therefore : 
\[
v^{-1}(Q)=\varPi_{Q}\circ dh(Q)
\]

Now by differentiating (\ref{eq:definition phibar}) with respect
to $Q$ : 
\[
\partial_{q}\bar{\phi}(t,Q)=\partial_{q}\phi_{1}(t,h(Q))\circ dh(Q)
\]

So because (\ref{eq:zero normal grad phi1}), for $y\in T_{Q_{0}}M$
\begin{align*}
<\nabla_{q}\bar{\phi}(t,Q_{0}),y> & =<\varPi_{Q_{0}}\nabla_{q}\phi(t,q_{0}),dh(Q_{0})(y)>\\
 & =<\varPi_{Q_{0}}\nabla_{q}\phi(t,q_{0}),\varPi_{Q_{0}}dh(Q_{0})(y)>\\
 & =<\varPi_{Q_{0}}\nabla_{q}\phi(t,q_{0}),v^{-1}(Q_{0})(y)>\\
 & =<v^{-1}(Q_{0})(\varPi_{Q_{0}}\nabla_{q}\phi(t,q_{0})),y>
\end{align*}

Where the last equation follows from the self-adjointness of $\varPi_{Q_{0}}$
and $v^{-1}(Q_{0})$.

Hence : 
\begin{align*}
\nabla_{q}\bar{\phi}(t,Q_{0}) & =v^{-1}(Q_{0})\left(\varPi_{Q_{0}}\nabla_{q}\phi(t,q_{0})\right)
\end{align*}

\begin{equation}
\partial_{q}\phi(t,q_{0})\circ v^{-1}(q_{0})=\eta(q_{0})\partial_{q}\bar{\phi}(t,Q_{0})\label{eq:partial der phi}
\end{equation}

Substituting in (\ref{eq: phi bar sub sol}) and using the definition
of $H$ , we deduce that$\phi$ is a sub-solution at $(t_{0},q_{0})$
for the Hamilton-Jacobi equation with Hamiltonian $H$.

The case of super-solutions is treated symmetrically. 
\end{proof}

\bibliographystyle{amsalpha}
\bibliography{biblio.bib}

\end{document}